\newtheorem{theo}{Theorem}[section]
\newtheorem{prop}{Proposition}[section]
\numberwithin{equation}{section}
\def\md{\mathrm{~mod~}}
\def\R{\mathbb{R}}
\def\H{\mathbb{H}}
\def\A{\mathbb{A}}
\def\N{\mathbb{N}}
\def\Q{\mathbb{Q}}
\def\Z{\mathbb{Z}}
\def\D{\mathbb{D}}
\def\ND{N_{\mathbb{D}}}
\def\NA{N_{\mathbb{A}}}
\def\ZB{\mathcal{Z}_h}
\def\G{\mathcal{G}}
\newcommand{\Rj}{\mathrm{Re}}
\newcommand{\Ij}{\mathrm{Im}}
\newcommand{\eg}{\mathbf{e}}
\newcommand{\ig}{\mathbf{i}}
\newcommand{\jg}{\mathbf{j}}
\begin{document}
\bibliographystyle{alpha}

\title{ Radix form in Hyperbolic and Dual numbers   }
\author{ Sayed Kossentini\footnote {Universit\'e de Tunis El Manar, Facult\'e des Sciences de Tunis, Department of Mathematics, 2092, Tunis,  Tunisia. E-mail address:
		sakossentini@gmail.com}}

\date{}
\maketitle

\begin{abstract}
We investigate number systems for the ring of integers of hyperbolic and dual numbers. We  characterize all canonical number systems providing radix form for hyperbolic and dual numbers. Our approach allows us to get suitable bases by means of Banach lattice algebra structure. 
	
\end{abstract}
{\bf Key words}: Radix form, Hyperbolic numbers, Dual numbers,  Canonical number system,  Riesz algebra, Banach lattice.

\section{Introduction}
Number system for a natural number is a positional representation in any integral basis $b\geq 2$. The most commonly used are binary and decimal systems. Using a negative basis  $b\leq -2$,  every integer $n \in \Z$ has a unique representation of the form 
\begin{equation*}
n=a_0+ \cdots+ a_l b^l, \quad a_i \in \{0, 1, \cdots, |b|-1\}.
\end{equation*}
A natural extension of these concepts to the ring  $\mathcal{O}_K$ of integers of a number field $K$ provides the following definition: let $\gamma \in \mathcal{O}_K$ and  $\mathfrak{N}$ be a complete residue system modulo $\gamma$ (i.e. for every $\alpha \in \mathcal{O}_K $ there exists a unique $r \in \mathfrak{N}$ such that $\alpha\equiv r ~( \mathrm{mod}~ \gamma)$), a pair $(\gamma, \mathfrak{N})$  is a number system  for $\mathcal{O}_K$ if every $\alpha \in \mathcal{O}_k$ has the form 
\begin{equation*}
\alpha= r_0+r_1 \gamma+ \cdots+ r_k \gamma^k,\quad r_i \in \mathfrak{N}.
\end{equation*}
This representation is unique since $\mathfrak{N}$ is a complete residue system modulo $\gamma$.  The number $\gamma$ is called base and $\mathfrak{N}$ set of digits of the number system.  $(\gamma, \mathfrak{N})$ is  called canonical number system if  $\mathfrak{N}=\mathfrak{N}_0=\{ 0,1, \cdots |N^K_\Q(\gamma)|-1\}$ , where $N^K_\Q(\alpha)$ is the  field norm of $\alpha$. The first result in this topic is due to   Knut \cite{Knut} who discovered that $-1+ \ig$ is a base for complex integers.  K\'atai and Szab\'o \cite{Kat-Sab1}  proved that the only possibles canonical number systems in Gaussian integers  are given by the basis  $q=a\pm \ig$ with $a\leq -1$. They also proved that  every complex number $z$ can be expressed in the form
\begin{equation*}
z= r_0+ \cdots+ r_l q^l+  \displaystyle \sum_{i=1}^\infty \frac{s_i}{q^i},\quad r_k, s_j \in \mathfrak{N}_0 
\end{equation*}
Their work was generalized to   a quadratic number field $K=\Q(\rho)$ by K\'atai and Kovacs \cite{Kat-Kova}  and  independently by Gilbert \cite{Gilb1}. Kovacs and Peto \cite{Kova-Pet} gave a characterization of all integral domains which have number systems. More recently, Peto and Thuswaldner \cite{Pet-Thus} investigate number systems defined over orders of number fields\\

The present  work deals with number systems for non integral domains.  To be more precise, number systems for the ring of integers of dual  and  hyperbolic numbers ( also called duplex, double, perplex and split-complex numbers). These   systems, together with complex numbers, are, up to an algebra isomorphism, the three possible two-dimensional real associative algebras, usually known as two-dimensional systems. During many years a number of researchers have been attracted by  algebraic, geometrical properties and  applications of the two-dimensional systems (see for instance \cite{Yal}, \cite{FC1}, \cite{KIS} ,\cite{SOB} and \cite{REY2}). In another direction, Wickstead \cite{WIC} studied  all possible Riesz algebra orderings on the two dimensional system using cones in $\R^2$ of the form
\begin{equation}\label{cones}
P^\beta_\alpha=\{ (x,y); x\geq 0, \alpha x\leq y\leq \beta x\}, ~ \alpha,\beta \in \R\cup\{-\infty,\infty\}, \alpha<\beta.
\end{equation}
He proved that dual and hyperbolic numbers are the only two dimensional systems that can be made into Banach lattice algebras. He also studied their representations in the Riesz algebra of regular operator. 

In this paper, we will characterize all canonical number systems providing radix form for hyperbolic and dual numbers. This can be done thanks to their Banach lattice algebra structure. 

\section{Preliminaries}
In this section, we recall  basic properties of hyperbolic and dual numbers   which we will use throughout this paper (for more details we refer the reader to \cite{GK1} and \cite{GK2}). For the used lattice concepts we recall some notions about Riesz algebras and $f$-algebras  (see for instance \cite{ZAN}, \cite{BIGA}, \cite{STEIN}).\\

A Riesz space is a real linear space $ L$ that is also a lattice ordered set (i.e. the supremum $u \vee v$ and the infimum $u\wedge v$ exist in $L$ for all $u,v \in L$ ), and the partial order satisfies: $u\leq v\Longrightarrow u+w\leq v+w$  for all $w \in L$ and $\alpha u\leq \alpha v$ for all $\alpha \in \R^+$. The set $L^+$ of all positive elements of $L$ (i.e. $u \geq 0$) is called positive cone of $L$. The absolute value of an element $a$ in a Riesz space $L$ is defined as $|u|= u\vee (-u)$.

 A real associative algebra ( with usual operations) is called Riesz algebra if its underlying linear space is a Riesz space with positive cone closed under multiplication. A Banach lattice algebra is a Riesz algebra equipped by a lattice norm (i.e. $|u| \leq |v| \Rightarrow \|u\| \leq \|v\|$) which is also sub-multiplicative.

An $f$-algebra is a Riesz algebra $A$ with the additional property: for any  $u,v, w \in A^+$, $u\wedge v=0\Rightarrow wu \wedge v= u\wedge w v=0$ 
\subsection{Hyperbolic numbers} 
In this section we summarize results of \cite{GK1} and \cite{GK2}.
The associative real algebra of hyperbolic numbers is
\begin{equation*}
	\D:=\Big\{ z=x+\jg y:\, x,y \in \R,\, \jg \notin \R;\, \jg^2=1\Big\}.
\end{equation*}
The group of units $\D^\times$ is characterized by all hyperbolic numbers $z$ with  $N_\D(z)\neq 0$, where   
\begin{equation*}
 N_\D(z)=z \bar{z}= (\Rj(z))^2 - (\Ij(z))^2.
\end{equation*}
The hyperbolic plane has an important basis $\{\eg_1,\eg_2\} $ where
\begin{equation}\label{idempbasis}
\eg_1= \frac{1+\jg}{2}, \quad \eg_2= \frac{1-\jg}{2}\Rightarrow \eg_1^2=\eg_1; \quad\eg_2^2=\eg_2; \quad\eg_1+\eg_2=1; \quad \eg_1\eg_2=0.
\end{equation}
 In this basis, each hyperbolic number $z$ can be written as 
\begin{equation}\label{decomspec}
	z=\pi_1(z) \eg_1+\pi_2(z) \eg_2,
\end{equation}
where $\pi_1(x+\jg y)=x+y$ and $ \pi_2(x+\jg y)=x-y$. From the representation \eqref{decomspec},  algebraic operations correspond to coordinate-wise operations, the square norm $N_\D(z)$  is the product $\pi_1(z) \pi_2(z)$ and the conjugation  $\bar{z}$ is given by exchanging $\pi_1(z)\leftrightarrow \pi_2(z)$. Moreover, $\D$ is an Archimedean $f$-algebra \cite{GK1} with positive cone $\D^+=\{z: \pi_1(z), \pi_2(z)\geq 0  \}$. According to \eqref{cones}, $\D^+=\{ z= x+ \jg y: (x,y) \in P^1_{-1}\}$.\\ The partial order $\leq$ on $\D$ is then 
	\begin{equation}\label{falgb}
		z, w \in \D;\, z\leq w \mbox{~if and only if~}\pi_{k}(z)\leq \pi_{k}(w) ,\quad k=1,2.
	\end{equation} 
A Banach lattice algebra norm on $\D$ is given by 	
\begin{equation*}\label{norm}
	\|z\|_{\D}:=\min \Big\{\alpha \in \R^+:\, \alpha \geq |z|\Big\}=|z|\vee \overline{|z|}.
\end{equation*}
It is the supremum norm in the idempotent basis $(\eg_1, \eg_2)$. In the standard basis $(1, \jg)$ it is given by $\| x+ \jg y\|_\D= \max\{ |x+y|, | x-y|\}$.\\
We now recall some results about the ring of hyperbolic integers $\ZB$ (see \cite{GK2}). It is  the ring of integers of $\Q(\jg)=\{ z=x+\jg y:\, x,y \in \Q\}$ given by 
	\begin{equation}\label{hintegers}
	\ZB:=\Z \eg_1+\Z \eg_2.
	\end{equation}
The units of $\ZB$ are $\{1,-1,\jg,-\jg\}$, and  each hyperbolic integer $\upsilon$ satisfies the identity
\begin{equation}\label{quadratich} 
\upsilon^2-2 \Rj(\upsilon)\upsilon+N_\D(\upsilon)=0.
\end{equation}
Divisibility in $\ZB$ is defined in a natural way. We say that $ \upsilon$ divides $\omega$, write $\upsilon| \omega$, if $ \omega=\mu \upsilon$ for some $\mu \in \ZB$. The congruence relation $\upsilon \equiv \mu \mbox{~mod~}\omega$ is defined as $ \omega| \upsilon-\mu$. From \eqref{hintegers}, divisibility in $\ZB$ is component-wise. The ordering (\ref{falgb}) on $\D$ induces the same ordering on $\ZB$.\\ One can extend usual division algorithm  to  hyperbolic integers; that is, for every pair $(\upsilon,\omega) \in \ZB\times \ZB$  with  $N_\D(\omega)\neq 0$ there exists a unique  $(\tau,\phi) \in \ZB\times \ZB $ such that
\begin{equation}\label{algodiv}
\upsilon=\tau+  \phi\omega,\quad 0\leq \tau\leq |\omega|-1.
\end{equation}
Thus, the  size of the group $\ZB/ \omega \ZB$, ($N_\D(\omega)\neq 0$) is 
\begin{equation}\label{sizegroup}
| \ZB/ \omega \ZB|=\mathrm{card}~\{ \nu \in \ZB:\, 0\leq \nu\leq |\omega|-1\}=N_\D(|\omega|).
\end{equation}
Just as complex numbers, \textit{hyperbolic Gaussian integers} are defined by 
\begin{equation*}
\G_\D= \Z[\jg]=\{ x+ \jg y: x, y \in \Z\}.
\end{equation*}  
It is a subring of $\ZB$ formed by the hyperbolic integers $\upsilon$ such that $ \upsilon\equiv 0 \md 2$ or $\upsilon\equiv 1 \md 2$. It is worth noticing here that 
the four binary classes are
$$
\ZB/2 \ZB=\{ \hat{0},\hat{1},\widehat{\eg_1},\widehat{\eg_2}\}.
$$
\subsection{Dual numbers}
The set of dual numbers $\A$ is defined in the same way as hyperbolic and complex numbers but with nilpotent imaginary unit; that is
\begin{equation*}
\A:=\Big\{ z=x+\varepsilon y:\, x,y \in \R,\, \varepsilon \notin \R;\, \varepsilon^2=0\Big\}.
\end{equation*}
The group of units $\A^\times$ is characterized by all dual numbers $z$ with  $N_\A(z)\neq 0$,  where 
\begin{equation*}
N_\A(z)=z \bar{z}= (\Rj(z))^2. \\
\end{equation*}
Thus $z= x+\varepsilon y \in \A^\times$ if and only if $x\neq 0$ and in this case the inverse of $z$ is $$z^{-1}= \frac{1}{x}- \varepsilon \frac{y}{x^2}.$$ Accoring to \cite[Proposition 2.1]{WIC}, Riesz algebra orderings on $\A$ are given by the cones $P^\infty_\alpha$, $( \alpha \in \left[ 0, \infty\right) )$ and $P^{\beta}_{-\infty}$ $( \beta \in \left(-\infty, 0\right]  )$.  The conjugation operator $\sigma(z)=\bar{z}$ is an order and algebra isomorphism between $(\A, P^\infty_0)$ and $( \A, P^0_{-\infty})$, where $P^\infty_0=\{ z= x+ \varepsilon y:\, x, y \geq 0 \}$.\\
 In the sequel, $\A$ is equipped by the Riesz algebra structure with positive cone $P^\infty_0$, and so the absolute value of a dual number $x+\varepsilon y$ is given by $| x+\varepsilon y|= |x|+\varepsilon |y|$. Further, if we let $\|.\|$ denote the lattice euclidean norm in $\R^2$ then (by \cite[Proposition 6.1]{WIC}) a Banach lattice algebra norm in $\A$ is given by
\begin{equation*}
\| z\|_\A= \displaystyle \displaystyle \sup _{\|\mathrm{x}\|=1} \| \mathcal{M}(z).\mathrm{x}\|,\quad \mathcal{M}( x+ \varepsilon y)=\left(\begin{array}{ccc}  x & 0 \\
y& x ×
\end{array}\right).
\end{equation*}
Therefore, an explicit expression of the norm $\|x+\varepsilon y\|_\A$ is given by
\begin{equation}\label{RNA}
\| x+\varepsilon y\|_\A=\displaystyle\sup_{ \theta \in \left[ 0, 2 \pi\right] }  \sqrt{ x^2+ y^2 \sin ^2 \theta+ x y \sin 2\theta}. 
\end{equation}
Just as Gaussian integers, {\it dual Gaussian integers} are defined by
\begin{equation*}
\G_\A=\Z[\varepsilon]=\{ z= x+ \varepsilon y:\, x,y \in \Z\}.
\end{equation*}
It is a ring of quadratic integers in dual numbers, and each  $\upsilon \in \G_\A$ satisfies the identity
\begin{equation} \label{quadricd}
\upsilon^2-2 \Rj(\upsilon)\upsilon+N_\A(\upsilon)=0.
\end{equation}
 
\section{Number systems in hyperbolic  integers}
This section characterize all number systems in hyperbolic integers by considering the complete residue systems
\begin{eqnarray}\label{hdigits}
	\mathfrak{N}_h= \{ \nu \in \ZB:\ 0\leq \nu\leq |q|-1\},	
\end{eqnarray} 
and 
\begin{eqnarray}\label{indigits}
		\mathfrak{N}^{\D}_0= \{0,1, \cdots N_{\D}(|q|)-1\}.
\end{eqnarray} 
The number system $(q, \mathfrak{N}_h)$ appears to be the natural generalization of the usual number systems. Indeed, given  an integral basis $q$ then by uniqueness of the representation, the digits $\nu_i(n) \in \mathfrak{N}_h$ of an ordinary integer $n$ must be natural numbers, since $ \overline{\nu_i(n)}= \nu_i(n) $ for each $i$,  and  selected from the set $\{0, 1, \cdots, |q|-1\}$.
\subsection{Number Systems  $(q, \mathfrak{N}_h)$}
In view of the hyperbolic algorithm division \eqref{algodiv}, the set $\mathfrak{N}_h$ is a complete residue system modulo $q$ if and only if it contains at least two digits, i.e. if and only if $N_{\D}(|q|)\geq 2$. For example, the bases $q$ with $|q|=2$ provide the four idempotent digits $0$, $1$, $\eg_1$ and $\eg_2$.
\begin{theo} \label{th1h} A pair $(q, \mathfrak{N}_h)$ is a  number system for $\ZB$ if and only if $q\leq -2$.	
\end{theo}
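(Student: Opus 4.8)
The plan is to reduce the two–dimensional problem to two independent classical one–dimensional radix problems via the idempotent decomposition \eqref{decomspec}. Writing $q=q_1\eg_1+q_2\eg_2$ with $q_j=\pi_j(q)\in\Z$, the maps $\pi_1,\pi_2$ realize a ring isomorphism $\ZB\cong\Z\times\Z$ that is simultaneously an order isomorphism for the ordering \eqref{falgb}. Since $|q|=|q_1|\eg_1+|q_2|\eg_2$ and $1=\eg_1+\eg_2$, the digit set \eqref{hdigits} splits as a product,
\begin{equation*}
\mathfrak{N}_h=\{\nu_1\eg_1+\nu_2\eg_2:\ 0\le\nu_j\le|q_j|-1,\ j=1,2\},
\end{equation*}
so that $\pi_j(\mathfrak{N}_h)=\{0,1,\dots,|q_j|-1\}$. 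Because addition, multiplication and the powers $q^k=q_1^k\eg_1+q_2^k\eg_2$ are all coordinatewise, a representation $\alpha=\sum_i r_i q^i$ with $r_i\in\mathfrak{N}_h$ holds in $\ZB$ if and only if $\pi_j(\alpha)=\sum_i \pi_j(r_i)q_j^i$ holds in $\Z$ for each $j$ with digits in $\{0,\dots,|q_j|-1\}$ (aligning the two one–dimensional expansions by leading zero digits, which belong to every digit set). Hence $(q,\mathfrak{N}_h)$ is a number system for $\ZB$ if and only if each $(q_j,\{0,\dots,|q_j|-1\})$ is a number system for $\Z$; since $\pi_j(-2)=-2$, the condition $q\le -2$ is exactly $q_1,q_2\le -2$, and everything reduces to the classical statement that $(\beta,\{0,\dots,|\beta|-1\})$ is a number system for $\Z$ if and only if $\beta\le -2$.

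For the \emph{only if} direction I would rule out the surviving one–dimensional bases. If $q_j\ge 2$, then every digit and every power $q_j^i$ is nonnegative, so each sum $\sum_i\pi_j(r_i)q_j^i$ is nonnegative and no integer with $\pi_j<0$ is representable. If $|q_j|\le 1$, then (discarding $q_j=0$, excluded by $N_\D(q)\ne 0$) the digit set reduces to $\{0\}$, representing only $0$. In either case $(q,\mathfrak{N}_h)$ fails, leaving only $q_1,q_2\le -2$, i.e. $q\le -2$.

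For the \emph{if} direction, assume $q\le -2$. Uniqueness of any finite representation is immediate from \eqref{algodiv}, since $\mathfrak{N}_h$ is a complete residue system modulo $q$ (here $N_\D(|q|)=|q_1q_2|\ge 4$). The substantive point is \emph{termination}: iterating the division algorithm $\upsilon=\tau+\phi q$ with $\tau\in\mathfrak{N}_h$ and replacing $\upsilon$ by $\phi=(\upsilon-\tau)q^{-1}\in\ZB$, one must reach $0$ after finitely many steps. Here I would use the lattice norm $\|z\|_\D=\max\{|\pi_1(z)|,|\pi_2(z)|\}$: in each coordinate $|\pi_j(\phi)|=|\pi_j(\upsilon)-\pi_j(\tau)|/|q_j|\le(|\pi_j(\upsilon)|+|q_j|-1)/|q_j|$, so $\|\phi\|_\D<\|\upsilon\|_\D$ whenever $\|\upsilon\|_\D$ is large enough, and the orbit eventually enters a fixed bounded box.

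The main obstacle is closing the termination argument on this bounded box, i.e. excluding nontrivial cycles. I would settle this by checking that $0$ is the only periodic point: a fixed point $\upsilon=\phi$ forces, coordinatewise, $\pi_j(\tau)=(1-q_j)\pi_j(\upsilon)$ with $1-q_j\ge 3$, which is incompatible with $0\le\pi_j(\tau)\le|q_j|-1$ unless $\pi_j(\upsilon)=0$; running the same sign–and–size estimate along a putative longer cycle through the coordinate of maximal absolute value excludes cycles of every length. Since the orbit is eventually bounded and admits no nonzero cycle, it must terminate at $0$, yielding the desired finite radix expansion and completing the equivalence.
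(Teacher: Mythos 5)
Your proof is correct and rests on the same idea as the paper's: split everything along the idempotent basis via $\pi_1,\pi_2$ and reduce to two independent negative-base expansions in $\Z$. Two differences are worth noting. In the necessity direction the paper only argues that $q$ must be negative (projecting $n\eg_j$ onto a coordinate with $\pi_j(q)>0$) and then asserts $q\le -2$ from $N_\D(|q|)\ge 2$; strictly speaking that inequality allows a coordinate equal to $-1$ (e.g.\ $q=-\eg_1-2\eg_2$), and your explicit exclusion of $|\pi_j(q)|\le 1$ --- the digit set collapses to $\{0\}$ in that coordinate --- closes this small gap, which is a genuine improvement. In the sufficiency direction the paper simply invokes the classical fact that every integer has a radix expansion in a base $b\le -2$ with digits $\{0,\dots,|b|-1\}$ and pads with leading zeros, whereas you re-prove that fact on $\ZB$ via the division algorithm \eqref{algodiv}, contraction of $\|\cdot\|_\D$ into the box where both coordinates lie in $\{-1,0,1\}$, and exclusion of nonzero cycles. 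The one step you should tighten is the exclusion of cycles of length $L>1$: the size estimate $|\pi_j(\upsilon_0)|\,|1-q_j^L|\le |q_j|^L-1$ kills odd-length cycles but for even $L$ only yields $|\pi_j(\upsilon_0)|\le 1$, so you should finish by listing the transitions on the invariant box (per coordinate, $-1\mapsto 1\mapsto 0\mapsto 0$), or simply cite the classical one-dimensional result as the paper does.
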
 
\begin{proof} 
	 Suppose that $(q, \mathfrak{N}_h)$ is a  number system for $\ZB$ and let $\upsilon$ be an arbitrary hyperbolic integer. Then
	\begin{equation}\label{rep1}
	\upsilon= \nu_0+ \cdots+ \nu_k q^k,\quad \nu_i \in \mathfrak{N}_h .
	\end{equation}
	Assuming that $q \notin \D^-$ (the set of negative hyperbolic numbers), then there exists $j\in \{1,2\}$ such that $b=\pi_j(q)>0$.  Let $n \in \Z$ and take $\upsilon=n \eg_j$. By \eqref{rep1} we have	
	\begin{equation*}
	n= a_0+ \cdots+ a_k b^k, \quad a_i =\pi_j( \nu_i) \in \N.
	\end{equation*}
	This yields a contradiction, and so $q$ must be negative with $q\leq -2$ since $\ND(|q|)\geq 2$.\\ 
	Conversely, let $q$ be an hyperbolic integer with $q\leq -2$ and $\upsilon$ be an arbitrary integer, and let $n_i=\pi_i(\upsilon)$, $b_i= \pi_i(q)$ $(i=1,2)$. The $b_i $  are integral bases with $b_i\leq -2$. It follows from usual number systems that we have
	\begin{equation*}
	n_1= r_0+ r_1 b_1+ \cdots+ r_l b_1^l,\quad r_i \in \{ 0,1, \cdots, |b_1|-1\}
	\end{equation*}
	and
	\begin{equation*}
	n_2=  s_0+ s_1 b_2+ \cdots+ s_l b_2^l,\quad s_i \in \{ 0,1, \cdots ,|b_2|-1\},
	\end{equation*}
where some of the leading digits may be zero. Hence,	
	
		\begin{equation*}
	\upsilon= \nu_0+\nu_1 q+ \cdots+ \nu_l q^l, \quad \nu _i= r_i \eg_1+ s_i \eg_2 \in \mathfrak{N}_h.
	\end{equation*}
\end{proof}

\subsection{ Canonical Number Systems $(q, \mathfrak{N}^\D_0)$}
We will formulate the analogue of canonical number systems for complex integers in the setting of hyperbolic integers by considering $\mathfrak{N}^\D_0$ \eqref{indigits} as digits.
\begin{theo}\label{th2h}
	A pair $(q, \mathfrak{N}^\D_0)$ is a canonical number system for $\ZB$ if and only if 
\begin{equation}\label{eqbase1}
q\leq 0,\quad \ND(q)\geq 2 \mbox{~and~} 2 \Ij(q)=\pm 1.
\end{equation}	
\end{theo}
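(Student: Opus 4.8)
The plan is to pass to the coordinate description $\ZB\cong\Z\times\Z$ afforded by the idempotent projections $\pi_1,\pi_2$, under which $q$ becomes the pair $(b_1,b_2)$ with $b_j=\pi_j(q)$, an ordinary integer digit $r\in\mathfrak{N}^{\D}_0$ becomes the diagonal pair $(r,r)$ (since $1=\eg_1+\eg_2$), and multiplication is componentwise. In these coordinates a representation $\upsilon=\sum_{i=0}^{k} r_i q^i$ of $\upsilon=(n_1,n_2)$ is exactly a polynomial $P(x)=\sum_i r_i x^i$ with coefficients in $\{0,\dots,\ND(|q|)-1\}$ satisfying $P(b_1)=n_1$ and $P(b_2)=n_2$ simultaneously. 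First I would record, via the division algorithm \eqref{algodiv} together with the Chinese Remainder Theorem, that $\mathfrak{N}^{\D}_0$ is a complete residue system modulo $q$ precisely when $\GC(b_1,b_2)=1$: then $\LC(|b_1|,|b_2|)=|b_1 b_2|=\ND(|q|)$, so the $\ND(|q|)$ diagonal digits hit every class of $\ZB/q\ZB$ exactly once.

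For necessity, the sign condition $q\le 0$ follows as in Theorem \ref{th1h}: if some $b_j>0$ then $P(b_j)=n_j\ge 0$ for every choice of nonnegative digits, so no $\upsilon$ with $\pi_j(\upsilon)<0$ is representable. The inequality $\ND(q)\ge 2$ is forced because a single digit represents only $0$. The decisive point is $2\Ij(q)=\pm 1$, equivalently $b_1-b_2=\pm 1$ (note $b_1-b_2=\pi_1(q)-\pi_2(q)=2\Ij(q)$). Here I would exploit the congruence $b_1\equiv b_2 \pmod{b_1-b_2}$: every integer polynomial satisfies $P(b_1)\equiv P(b_2)\pmod{b_1-b_2}$, so a representable $\upsilon$ must obey $(b_1-b_2)\mid(n_1-n_2)$. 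Applying this to $\upsilon=\eg_1$, where $(n_1,n_2)=(1,0)$, forces $(b_1-b_2)\mid 1$, hence $b_1-b_2=\pm 1$.

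For sufficiency I would run the greedy algorithm coming from \eqref{algodiv}: given $\upsilon$, let $r\in\mathfrak{N}^{\D}_0$ be the residue with $r\equiv n_1\pmod{b_1}$ and $r\equiv n_2\pmod{b_2}$, and pass to $T(\upsilon)=(\upsilon-r)/q$, i.e. $\pi_j(T\upsilon)=(n_j-r)/b_j$. The congruence obstruction above now vanishes because $b_1-b_2=\pm 1$ divides everything, so the coupled digits are always compatible. Termination is the crux: using the Banach lattice norm $\|\upsilon\|_{\D}=\max(|\pi_1(\upsilon)|,|\pi_2(\upsilon)|)$ and the estimate $|\pi_j(T\upsilon)|\le(|\pi_j(\upsilon)|+\ND(|q|)-1)/|b_j|$, I would show that when $|b_j|\ge 2$ the norm strictly decreases as long as $\|\upsilon\|_{\D}>\ND(|q|)-1$, driving every orbit into the finite region $\{\upsilon:\|\upsilon\|_{\D}\le \ND(|q|)-1\}$; a finite inspection then rules out any nonzero cycle of $T$ there, so the expansion is finite and terminates at $0$.

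The main obstacle I anticipate is precisely this termination step, together with the role of the hypothesis $|b_j|\ge 2$: the contraction estimate degenerates when one base equals $-1$, so the smallest configuration $\{b_1,b_2\}=\{-1,-2\}$ (where $\ND(q)=2$) is delicate, since there the first coordinate carries no residue information and the coupled polynomial condition behaves quite differently from the generic case. I would therefore treat the generic regime $b_1,b_2\le -2$ by the norm-contraction argument and examine the exceptional minimal-norm configuration separately, checking directly which elements admit a finite radix expansion.
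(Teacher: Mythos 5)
Your necessity argument and your description of when $\mathfrak{N}^{\D}_0$ is a complete residue system are sound and essentially coincide with the paper's: the congruence $P(b_1)\equiv P(b_2)\ (\mathrm{mod}\ b_1-b_2)$ applied to $\eg_1$ is exactly the paper's observation that $2\Ij(q)$ divides $2\Ij(\upsilon)$ for every representable $\upsilon$, and your CRT packaging of the residue-system condition ($\GC(b_1,b_2)=1$) is a cleaner version of the computation the paper does at the end of its proof. The sufficiency direction is where you genuinely diverge. The paper never iterates a backward division map $T$: it first manufactures, from the identity $c=|c|\bigl(q^2-(2a-1)q+a^2-a+\frac{c}{|c|}\bigr)$, a representation of $\upsilon$ with arbitrary \emph{nonnegative} integer coefficients, and then clears those coefficients into the digit range one position at a time using $\ND(q)=q^3-2aq^2+(a^2+a-1)q$; termination follows because the sum of the (nonnegative) coefficients never increases and must stabilize. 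Your route instead hinges on the assertion that ``a finite inspection rules out any nonzero cycle of $T$'' in the box $\|\upsilon\|_\D\le\ND(|q|)-1$. That assertion is the entire content of the sufficiency direction, not a formality: the digit set has size $|b_1b_2|$, much larger than $|b_j|$, so in each coordinate you are running a non-standard expansion, and the absence of nonzero periodic orbits is precisely the kind of statement that fails for other bases in other rings. You must either actually perform that inspection uniformly in $b_1,b_2\le -2$ with $|b_1-b_2|=1$, or replace it by a monovariant as the paper does. As written, this is a genuine gap.

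Your worry about the minimal configuration $\{b_1,b_2\}=\{-1,-2\}$ (i.e.\ $a=-1$, $\ND(q)=2$) is more than a technicality: carrying out the ``direct check'' you defer shows that this base is \emph{not} a number system, so the case cannot be closed in the affirmative. Indeed, for $q=-\eg_1-2\eg_2$ with digits $\{0,1\}$, the second coordinate of any finite word $\sum_i r_iq^i$ is the negabinary value $\sum_i r_i(-2)^i$; by uniqueness of negabinary expansions this vanishes only if every $r_i=0$, and then the first coordinate vanishes too, so $\eg_1=(1,0)$ has no representation even though $q$ satisfies \eqref{eqbase1}. (The paper's own clearing step also breaks silently here: the carry coefficient $a^2+a-1$ equals $-1<0$ when $a=-1$, which destroys the nonnegativity on which its digit-sum monovariant rests.) So your plan of isolating $\ND(q)=2$ is the right instinct, but the outcome of that isolated analysis is the exclusion of this base, not its inclusion; any correct treatment has to restrict the sufficiency argument to $b_1,b_2\le -2$.
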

\begin{proof}
	
 Suppose that $\left(q, \mathfrak{N}_0^\D \right) $ is a number system for $\ZB$ and let $\upsilon$ be an arbitrary hyperbolic integer. Then $\upsilon$ has the form
	\begin{equation}\label{reprth1}
	\upsilon=r_0+ \cdots+ r_k q^k, \quad r_i \in \mathfrak{N}_0^\D .
	\end{equation}
A similar reasoning as Theorem \ref{th1h} shows that $q\leq 0$ with   $\ND(q)\geq 2$.  Note that
\begin{equation}\label{identim}
2 \Ij(q^n)= \left( \pi_1(q)\right) ^n-\left( \pi_2 (q)\right) ^n,\quad (n=1,2,\cdots). 
\end{equation}
Then \eqref{identim} together with  \eqref{reprth1} yields that  $\Ij(2q)| \Ij 2 \upsilon$. Take $2\upsilon= 1+\jg $ we get $2 \Ij(q) | 1$, and so $2 \Ij(q)=\pm 1 $.\\
Conversely, let $q=a \eg_1+ b \eg_2 \in \ZB$   satisfying \eqref{eqbase1}. Assume that $2 \Ij(q)=a-b=1$ (the other case follows by considering the conjugate $\bar{q}$, i.e.   by exchanging $a\leftrightarrow b$). Thus 
 \begin{equation} \label{base1}
 q=a\eg_1+ (a-1)\eg_2,\quad a\leq 0,\mbox{~and}~ a^2-a\geq 2.
 \end{equation}
  Let  $\upsilon=n_1 \eg_1+ n_2 \eg_2 \in \ZB$, $n= n_1-n_2 $ and $ m=n_1-(n_1-n_2) a.$ Then
 \begin{equation}\label{Z(q)}
 \upsilon=m+ n q, \quad m,n \in \Z.
 \end{equation}
At this step we sketch the main idea and technique of the proof of \cite[Theorem 1]{Kat-Sab1} to adapt it in the setting of hyperbolic integers. We first  prove that $\upsilon$ can be written as
\begin{equation}\label{repre2}
 \upsilon =u_0+u_1 q+u_2 q^2+ u_3 q^3,\quad u_i \in \N.
\end{equation} 
To do this,  we will use the identity which holds for each  $c \in \Z\setminus\{0\}$ :
\begin{equation}\label{quadraticidentity}
c=|c|\left(  q^2-(2a-1) q+ a^2-a+ \frac{c}{|c|}\right). 
\end{equation}
(Note that (\ref{quadraticidentity}) is a direct consequence of (\ref{quadratich})).\\	
One can clearly assume, without loss of generality, that   $ n, m \in \Z\setminus\{0\}$. Identity \eqref{base1} shows that $-(2a-1)$ and $ a^2-a+ \frac{c}{|c|}$ are positive integers for every $c \in \Z\setminus \{0\}$. Then \eqref{repre2} holds from \eqref{quadraticidentity} by substituting  in \eqref{Z(q)} $n$ and $m$ by
\begin{equation*}
|n|\left(  q^2 -( 2a-1)q+  a^2-a+ \frac{n}{|n|}\right)
\end{equation*}
and
\begin{equation*}
 |m|\left(  q^2 -( 2a-1)q+  a^2-a+ \frac{m}{|m|}\right).
\end{equation*}
It follows from \eqref{repre2}, that there exists an integer $k\geq 3$ such that  
\begin{equation}\label{d-represntation}
\upsilon= d_0+ d_1 q+ \cdots+ d_kq^k,\quad  d_i \in \N .
\end{equation}
 Let $r_0 \in \mathfrak{N}_0^\D$, $t_0 \in \N$ be such that $d_0=r_0+ t_0 N_\D(q)$. Observing that
\begin{equation*}
\ND(q)= q^3-2a q^2+(a^2+a-1)q= a^2-a,
\end{equation*}
one gets from \eqref{d-represntation}
\begin{equation}\label{algo1}
\upsilon= r_0+  \upsilon_1 q , \quad \upsilon_1=d^{(0)}_1+ \cdots+ d^{(0)}_k q^{k-1},
\end{equation}
where 
\begin{eqnarray*}
 d_1^{(0)}&=&d_1+ (a^2+a-1 )t_0,\\ d_2^{(0)}&=&d_2-2at_0,~ d_3^{(0)}=d_3+ t_0,\\ d_j^{(0)}&=&d_j, \quad 4\leq j\leq k.
\end{eqnarray*}
Summing the  $d_j^{(0)}$ over $\{ 1,2, 3,\cdots, k\}$, one has
\begin{equation}\label{aqdk}
d_1^{(0)}+d_2^{(0)}+d_3^{(0)}+ \cdots+ d^{(0)}_k=t_0 \ND(q)+d_1+d_2+d_3+\cdots+ d_k.
\end{equation}
Let $\sigma(\upsilon)= d_0+ \cdots+d_k $ and $ \sigma(\upsilon_1)=d_1^{(0)}+ \cdots+ d^{(0)}_{k}$.
Since $d_0=r_0+ t_0 N_\N(q)$, then from \eqref{aqdk} we have
\begin{equation}\label{algo2}
\sigma(\upsilon)=r_0+\sigma(\upsilon_1).
\end{equation}
Repeating the algorithms \eqref{algo1} and \eqref{algo2},  we get

\begin{equation}\label{eq3th1}
\upsilon_i= r_i+ \upsilon_{i+1} q; \quad\sigma(\upsilon_i)=r_i+\sigma(\upsilon_{i+1}),~ ~ r_i \in \mathfrak{N}_0^\D, 
\end{equation}
 where $\sigma(\upsilon)\geq\sigma(\upsilon_1)\geq \sigma(\upsilon_2)\geq \cdots\geq 0$. This means that there exists an integer $l \in \N$ such that the sequence $(\sigma(\upsilon_i))$ is constant for $i\geq l$, which implies from \eqref{eq3th1} that  $ q^{i-l}$ divides $\upsilon^l$ for all $i\geq l$. This holds only if $\upsilon_l=0$; that is, the algorithm finishes at the step $l$. Hence
\begin{equation*}
\upsilon=r_0+ \cdots+ r_{l-1} q^{l-1}.
\end{equation*}
It remains to prove  that $\mathfrak{N}_0^\D$ is   a complete residue system modulo $q$ ,  i.e. $\mathfrak{N}_0^\D$ has no two elements congruent modulo $q$. Let $s,r \in \mathfrak{N}_0^\D$  be such that   $s\equiv r \mbox{~mod~} q$,i.e. $s-r= \kappa q$ for some $\kappa \in  \ZB$. Then  
\begin{equation}\label{eq5th1}
| s-r|= | \kappa q| \leq  a^2-a-1.
\end{equation}
As $| s-r| \in \N$ means that $ |\kappa| |q|=\pi_1(|\kappa q|) = \pi_2(|\kappa q|)$, we get 
\begin{equation}\label{eq6th1}
|\kappa| |q|=-a \pi_1(|\kappa|)= (1-a) \pi_2(|\kappa|).
\end{equation}
Since $ (-a, 1-a)=1$, then \eqref{eq6th1} yields that  $ \pi_2(|\kappa|)= n (-a)$ for some   $n  \in \N$ and then $\pi_1(|\kappa|)= n(1-a)$. Therefore, from \eqref{eq5th1} we obtain
\begin{equation*}
n( a^2-a) \leq a^2-a -1. 
\end{equation*}
This holds only if $n=0$,  since $a^2-a\geq 2$, and so   $\pi_1(|\kappa|)= \pi_2 (|\kappa|)=0$, which means $\kappa=0$. Hence $s=r$ and this completes the proof. 
	\end{proof}
Theorem \ref{th3h} shows that every hyperbolic Gaussian integer has a unique representation of the form \eqref{reprth1}. However, the bases $q$ are not a hyperbolic Gaussian integers, since $2 \Ij(q)=\pm 1$. The  next theorem characterizes all possible bases $q \in \G_\D$ for which $(q, \mathfrak{N}_0^\D)$ is a canonical number system for $\G_\D$.
 \begin{theo}\label{th3h}
 	Let $q$ be an hyperbolic Gaussian integer. Then, $(q, \mathfrak{N}_0^\D)$ is a canonical number system for hyperbolic Gaussian integers if and only if $q=a\pm \jg$ with $a\leq -2$.	
 \end{theo}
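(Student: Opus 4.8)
The plan is to follow the K\'atai--Szab\'o strategy already deployed for Theorem~\ref{th2h}, the two new features being that the base lies in $\G_\D$ and that the digit extraction must respect the parity condition $\pi_1(\upsilon)\equiv\pi_2(\upsilon)\md 2$ that characterises $\G_\D$ inside $\ZB$. Write $q=c+\jg d$ with $c,d\in\Z$, so $\pi_1(q)=c+d$ and $\pi_2(q)=c-d$. For necessity I would first rerun the argument of Theorem~\ref{th1h}: if some $\pi_j(q)>0$, then applying $\pi_j$ to a representation of $\upsilon=-1\in\G_\D$ expresses $-1$ as a sum of nonnegative terms, which is impossible; hence $q\le 0$, and at least two digits are needed, so $N_\D(q)\ge 2$. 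Then, using $2\Ij(q^n)=\pi_1(q)^n-\pi_2(q)^n$ (divisible by $\pi_1(q)-\pi_2(q)=2d$) and applying $\Ij$ to a representation of $\upsilon=\jg$, I get $2d\mid 2$, whence $d=\pm 1$ and $q=c\pm\jg$. Since conjugation exchanges $\pi_1\leftrightarrow\pi_2$ and maps $\G_\D$ onto itself, I may assume $d=1$, i.e. $q=a+\jg$ with $\pi_1(q)=a+1$, $\pi_2(q)=a-1$; combining $q\le 0$ with $N_\D(q)=a^2-1\ge 2$ gives $a\le -2$.

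For sufficiency I would reproduce the mechanism of Theorem~\ref{th2h} for the base $q=a+\jg$. Given $\upsilon=x+\jg y\in\G_\D$, writing $\upsilon=m+nq$ with $n=y$ and $m=x-ay$ puts $\upsilon$ in the form $m+nq$, $m,n\in\Z$. The quadratic relation \eqref{quadratich}, here $q^2-2aq+(a^2-1)=0$, then gives for every $c\in\Z\setminus\{0\}$ the identity $c=|c|\bigl(q^2-2aq+a^2-1+\tfrac{c}{|c|}\bigr)$, whose coefficients $|c|$, $-2a|c|$ and $|c|(a^2-1)+c$ are all nonnegative integers because $a\le -2$. Substituting these expressions for $m$ and $n$ expresses $\upsilon=\sum_i u_iq^i$ with $u_i\in\N$.

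Next I would peel off digits as in Theorem~\ref{th2h}, using the cubic identity $N_\D(q)=q^3-(2a+1)q^2+(a^2+2a-1)q$. Writing the constant term as $d_0=r_0+t_0N_\D(q)$ with $r_0\in\mathfrak{N}_0^\D$ and $t_0\in\N$, one obtains $\upsilon=r_0+\upsilon_1q$ together with $\sigma(\upsilon)=r_0+\sigma(\upsilon_1)$, where $\sigma$ denotes the sum of the (natural) coefficients. Iterating produces $\upsilon_i=r_i+\upsilon_{i+1}q$ and a nonincreasing sequence $\sigma(\upsilon)\ge\sigma(\upsilon_1)\ge\cdots\ge 0$, which must eventually be constant.

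The hard part, and the step on which the whole argument turns, is to deduce that once $(\sigma(\upsilon_i))$ stabilises the process actually stops, i.e. $\upsilon_l=0$. Stabilisation only yields that $\upsilon_l$ is divisible by every power of $q$, and in the non-domain $\ZB$ this does not force $\upsilon_l=0$: if $\pi_j(q)=\pm 1$ for some $j$, then $q$ is a unit on the $\eg_j$-component and any nonzero element supported there is divisible by all powers of $q$, yielding a nonterminating cycle (and, symptomatically, the reduction coefficient $a^2+2a-1$ then fails to be nonnegative, so $\sigma$ loses its meaning). Ruling this out requires $|\pi_1(q)|,|\pi_2(q)|\ge 2$, i.e. $|a+1|\ge 2$ (that is $a\le -3$); this is where the lower bound on $a$ is genuinely used, and the borderline $a=-2$, which makes $\pi_1(q)=-1$, is the delicate case demanding the closest scrutiny. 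Once termination is secured, it remains to verify that $\mathfrak{N}_0^\D$ is a complete residue system modulo $q$ in $\G_\D$: from $s-r=\kappa q$ with $s,r\in\mathfrak{N}_0^\D$ one reads off $\pi_1(\kappa)=\tfrac{s-r}{a+1}$ and $\pi_2(\kappa)=\tfrac{s-r}{a-1}$, and since $|s-r|\le N_\D(q)-1$ the integrality of these together with the defining parity $\pi_1(\kappa)\equiv\pi_2(\kappa)\md 2$ of $\G_\D$ forces $\kappa=0$, hence $s=r$; the parity is precisely what removes the otherwise admissible value $s-r=\pm\tfrac{a^2-1}{2}$ when $a$ is odd.
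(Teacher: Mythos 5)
Your strategy is the same as the paper's: necessity via $q\le 0$, $N_\D(q)\ge 2$ and $\Ij(q)\mid\Ij(q^n)$ forcing $\Ij(q)=\pm1$, and sufficiency via the two identities $c=|c|\bigl(q^2-2aq+a^2-1+\tfrac{c}{|c|}\bigr)$ and $N_\D(q)=q^3-(2a+1)q^2+(a^2+2a-1)q$ feeding the digit-peeling scheme of Theorem~\ref{th2h}. Your residue-system argument (integrality of $\tfrac{s-r}{a+1}$ and $\tfrac{s-r}{a-1}$ plus the parity $\pi_1(\kappa)\equiv\pi_2(\kappa)\md 2$) is a correct alternative to the paper's norm computation $N_\D(\kappa)N_\D(q)=(a^2-1)^2\Ij(\kappa)^2\le(a^2-2)^2$; both dispose of the would-be exception $s-r=\pm\tfrac{a^2-1}{2}$.

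The case you flag as ``delicate'' and leave open is, however, a genuine gap, and it cannot be closed: for $a=-2$ the statement is false. Take $q=-2+\jg$, so $\pi_1(q)=-1$, $\pi_2(q)=-3$, $N_\D(q)=3$ and $\mathfrak{N}_0^\D=\{0,1,2\}$. Applying the ring homomorphism $\pi_2$ to any expansion $\upsilon=\sum_i r_iq^i$ yields $\pi_2(\upsilon)=\sum_i r_i(-3)^i$, which is the (unique) negabase-$3$ expansion of $\pi_2(\upsilon)$ with digits $\{0,1,2\}$; hence the digits are completely determined by $\pi_2(\upsilon)$, and $\pi_1(\upsilon)=\sum_i r_i(-1)^i$ is then forced. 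For $\upsilon=1+\jg\in\G_\D$ one has $\pi_2(\upsilon)=0$, which forces every $r_i=0$, contradicting $\pi_1(\upsilon)=2$. So $1+\jg$ has no representation and $(q,\mathfrak{N}_0^\D)$ is not a number system for $\G_\D$. Your diagnosis of where the machinery breaks is exactly right on both counts: at $a=-2$ the coefficient $a^2+2a-1$ equals $-1$, so the peeling step no longer keeps the coefficients in $\N$, and the termination implication ``$q^k\mid\upsilon_l$ for all $k$ implies $\upsilon_l=0$'' fails because $\pi_1(q)=-1$ is a unit, so $q$ is invertible on the $\eg_1$-component. Your argument (like the paper's, which passes over this silently) proves sufficiency only for $a\le-3$; a correct version of the theorem must exclude $a=-2$, and the necessity direction then needs an extra argument such as the counterexample above, since $q\le 0$, $N_\D(q)\ge2$ and $\Ij(q)=\pm1$ alone do not rule out $a=-2$.
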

 \begin{proof} 	The proof is entirely analogous  to that of Theorem \ref{th1h}. A base $q$ must be negative with $N_\D(q)\geq 2$ and ( by $\Ij(q)| \Ij(q^n),~ n=1,2, \cdots$) $\Ij(q)=\pm 1$. Thus, $q=a\pm \jg$ with $a\leq 0$ and $a^2\geq 3$; that is, $a\leq -2$. Conversely, we have $\G_\D=\Z[q]$ for each basis $q=a\pm \jg$. Therefore,	using the identities
 \begin{equation*}
 c=|c|\left(  q^2-2a q+a^2-1+ \frac{c}{|c|}\right),\quad c \in \Z\setminus \{0\}  
 \end{equation*}
 and
\begin{equation*}
 \ND(q)= q^3-(2a+1) q^2+(a^2+2a-1)q,
\end{equation*}
 a similar reasoning   shows that each of the bases $q=a\pm \jg$ $(a\leq -2)$ can be used to represent hyperbolic Gaussian integers with $\mathfrak{N}_0^\D=\{ 0, 1, \cdots, a^2-2\}$ as digits set.\\ 
 We prove now  that $\mathfrak{N}_0^\D$ is a complete residue system modulo $q$. Suppose that $q=a+\jg$ (the other case follows by considering the conjugate $\bar{q}$),  and let $s,r \in \mathfrak{N}_0^\D$ be such that $s\equiv r \mbox{~\textrm{mod}~} q$, i.e. $ s-r= \kappa q$ for some $\kappa \in \G_\D $. Then
 \begin{equation}\label{eq2th2}
\ND(\kappa) \ND(q) = (s-r)^2 \leq (a^2-2)^2.
 \end{equation}
As $\pi_1( \kappa q)=\pi_2( \kappa q)$, one obtains
 \begin{equation}\label{eq3th2}
 \Rj( \kappa)=-a \Ij (\kappa).
 \end{equation}
 Thus,  $\ND(\kappa)\ND(q) =(a^2-1)^2 \Ij (\kappa)^2$. Since $a^2>2$, one gets by \eqref{eq2th2}, $(a^2-1) | \Ij(\kappa)| \leq a^2-2$. This holds only if $\Ij(\kappa)=0$, i.e. by \eqref{eq3th2}, $\kappa=0$. Hence $s=r$. 
\end{proof}
\section{Canonical Number systems in dual integers}
In this  section we characterize all canonical number systems $(q, \mathfrak{N}_0^{\A})$ in dual integers, where
\begin{eqnarray*}
\mathfrak{N}^{\A}_0= \{0,1, \cdots , N_{\A}(q)-1\}= \{ 0, 1, \cdots ,\left( \Rj(q)\right)^2-1 \}.
\end{eqnarray*}
 \begin{theo} \label{thd}
 $(q,\mathfrak{N}^{\A}_0 )$ is a number system in $\G_\A$ if and only if $q=a\pm\varepsilon$ with $a\leq -2$.
 \end{theo}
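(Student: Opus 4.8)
The plan is to follow the template of Theorem~\ref{th3h}, with the real-part map $\Rj\colon\G_\A\to\Z$ taking over the role played by the idempotent projections $\pi_1,\pi_2$ in the hyperbolic case. For necessity, write $q=a+\varepsilon b$ with $a,b\in\Z$. Since $\mathfrak{N}_0^{\A}$ must contain at least two digits, $N_\A(q)=a^2\geq 2$, so $|a|\geq 2$. Because $\Rj$ is a ring homomorphism and every digit is a nonnegative integer, applying $\Rj$ to a representation of $\upsilon=-1$ would give $-1=\sum_i r_i a^i$ with $r_i\geq 0$, which is impossible when $a>0$; hence $a\leq -2$. To pin down $b$ I would use $q^i=a^i+i\,a^{i-1}b\,\varepsilon$, so that $\Ij(q^i)=i\,a^{i-1}b$ and therefore $b\mid\Ij(\upsilon)$ for every representable $\upsilon$; taking $\upsilon=\varepsilon$ forces $b\mid 1$, i.e.\ $b=\pm 1$. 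This yields $q=a\pm\varepsilon$ with $a\leq -2$.

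For sufficiency I may assume $q=a+\varepsilon$ with $a\leq -2$, the case $q=a-\varepsilon$ following by passing to the conjugate $\bar q$ (an algebra isomorphism leaving $N_\A$, and hence the digit set $\mathfrak{N}_0^{\A}$, unchanged). Since $\varepsilon=q-a$, one has $\G_\A=\Z[q]$, so any $\upsilon=x+\varepsilon y$ can be written $\upsilon=m+nq$ with $m=x-ay$ and $n=y$ in $\Z$. The element $q$ satisfies $(q-a)^2=0$, that is $q^2-2aq+a^2=0$, which yields the identity $c=|c|\big(q^2-2aq+a^2+\tfrac{c}{|c|}\big)$ for $c\in\Z\setminus\{0\}$; as $a\leq -2$ the coefficients $1$, $-2a$ and $a^2\pm 1$ are all positive, so substituting this for $m$ and for $n$ (inside $nq$) rewrites $\upsilon$ as a polynomial $\upsilon=\sum_i d_i q^i$ with all $d_i\in\N$.

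The crux is the carry identity
\begin{equation*}
N_\A(q)=a^2=q^3-(2a+1)q^2+(a^2+2a)q,
\end{equation*}
whose coefficients $1$, $-(2a+1)$ and $a^2+2a=a(a+2)$ are all nonnegative for $a\leq -2$ and sum to $a^2$. Writing $d_0=r_0+t_0\,N_\A(q)$ with $r_0\in\mathfrak{N}_0^{\A}$ and carrying $t_0\,N_\A(q)$ into the coefficients of $q,q^2,q^3$ through this identity keeps every coefficient in $\N$, and the coefficient-sum condition makes the digit sum satisfy $\sigma(\upsilon)=r_0+\sigma(\upsilon_1)$, exactly as in \eqref{algo2}. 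Iterating produces a non-increasing sequence $\sigma(\upsilon)\geq\sigma(\upsilon_1)\geq\cdots\geq 0$ which must stabilize, and stabilization forces the running remainder to vanish, so the algorithm terminates and delivers the desired finite expansion of $\upsilon$.

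It then remains to verify that $\mathfrak{N}_0^{\A}$ is a complete residue system modulo $q$. If $s\equiv r\pmod q$ with $s,r\in\mathfrak{N}_0^{\A}$, I would write $s-r=\kappa q$ with $\kappa=u+\varepsilon v\in\G_\A$; comparing the $\varepsilon$-parts gives $u=-av$, and the real parts then give $s-r=-a^2 v$, so $a^2\mid s-r$. Since $|s-r|\leq a^2-1<a^2$, this forces $s=r$. I expect the main obstacle to be isolating the degree-three carry identity above with simultaneously nonnegative coefficients and coefficient-sum equal to $a^2$: both properties are essential, the first to keep the digits in $\N$ throughout the reduction and the second to control the digit sum so that the algorithm provably terminates. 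Once this identity is secured, the remainder is a routine transcription of the argument of Theorem~\ref{th3h}.
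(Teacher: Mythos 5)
Your proposal is correct and follows essentially the same route as the paper: the same necessity argument via $\Rj$ and the divisibility $\Ij(q)\mid\Ij(q^i)$, the same two identities $c=|c|\left(q^2-2aq+a^2+\tfrac{c}{|c|}\right)$ and $N_\A(q)=q^3-(2a+1)q^2+(a^2+2a)q$ driving the digit-reduction algorithm, and the same computation $s-r=-a^2v$ for the complete-residue-system check. If anything, you make explicit two points the paper leaves implicit (the positivity of the carry coefficients and the fact that their sum equals $N_\A(q)$, which is what makes the digit-sum argument terminate), but the substance is identical.
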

 \begin{proof}
  Let $q=a+\varepsilon b \in \G_A$ be such that $(q,\mathfrak{N}^{\A}_0 )$ is a number system in $\G_A$. Then each  dual Gaussian integer  $\upsilon$  can be expressed as
  	\begin{equation}\label{eq1thd}
  	\upsilon= r_0 + r_1 q+ \cdots+ r_k q^k, \quad r_j \in \mathfrak{N}^{\A}_0.
  	\end{equation} 
  Similarly, we  have  $b= \Ij(q)=\pm 1$. Suppose that $a\geq 0$. It follows from  \eqref {eq1thd} and the identity	$(a+\varepsilon b)^n= a^n+  \varepsilon n a^{n-1} $ $(n=1,2, \cdots)$, that we have
  	\begin{equation*}
  	\Rj(\upsilon)=r_0+r_1 a+ \cdots+ r_k a^n\geq 0.
  	\end{equation*} 
  	This is not possible for every $\upsilon$, and so we must have $a< 0$ which implies  $a\leq -2$, since $N(q)=a^2\geq 2$..\\
  	The converse is proved  using the same method of Theorem \ref{th1h} by considering the corresponding  identities derived from \eqref{quadricd}; that is,  
  	\begin{equation*}
  	c=|c|\left(  q^2-2a q+a^2+ \frac{c}{|c|}\right),\quad c \in \Z\setminus \{0\} ,
  	\end{equation*}
  	and
  	\begin{equation*}
  	\NA(q)= q^3-(2a+1) q^2+(a^2+2a)q.
  	 \end{equation*}
  	 To complete the proof we show that $\mathfrak{N}^\A_0=\{ 0, 1, \cdots, a^2-1\} $ is a  complete residue system modulo $q$ for a basis $q=a+\mu\varepsilon$ with $a\leq -2$, $\mu=\pm 1$. Let  $r, s \in \mathfrak{N}^\A_0 $ be such that $r\equiv s \mbox{~\textrm{mod}~} q $, i.e. $s-r=\kappa q$ for some $\kappa \in \G_\A$. Write $\kappa=k_1+\varepsilon k_2$, then $k_1 a=r-s$ and $k_1 \mu+ k_2 a=0$, which  implies that $ r-s= -\mu k_2 a^2$ and then $|k_2| a^2\leq a^2-1$. This holds only is $k_2=0$. Which gives $r=s$. 
  	 	 \end{proof}
\section{Radix representations  }
In this section we will prove that every hypercomplex number $z \in \mathbb{H}=\A$ or $\D$ is  representable in radix form with respect to number systems $(q, \mathfrak{N})$ in the integers $Z_{\H}$ of $\H$, where $\mathfrak{N}=\mathfrak{N}_h, \mathfrak{N}_0^{\D} $ if $Z_\D= \ZB, \G_\D$ and $\mathfrak{N}=\mathfrak{N}_0^{\A}$ if $\Z_{\A}= \G_\A$. The corresponding   bases are characterized by Theorems \ref{th1h}, \ref{th2h}, \ref{th3h} and \ref{thd}. Our approach is based on the structure of $Z_\H$ as a  closed (for usual topology) full lattice of the plane $\H\equiv \R^2$ and on a topological property of the fundamental domain $\mathcal{F}^{\H}_{(q, \mathfrak{N})} $, where
\begin{equation}\label{fd}
\mathcal{F}^\H_{(q, \mathfrak{N})}:=\{ z \in \H: z=\displaystyle  \sum_{i=1}^{\infty}  \frac{\mu_i}{q^i},~ \mu_i \in  \mathfrak{N}  \}. 
\end{equation}
The corresponding domain in complex number is an arcwise-connected compact set  with a fractal boundary (see \cite{Gilb2}, \cite{Gilb3} and \cite{ATO}). For our purpose we need only to prove compactness of \eqref{fd}.
 \begin{prop} \label{pfd} Let $(q, \mathfrak{N})$ be a number system in $\Z_\H$ (with $q\leq -2$ if $\H=\D$). Then $\mathcal{F}^\H_{(q, \mathfrak{N})} $ is a compact set for usual topology of $\H$.
 		  \end{prop}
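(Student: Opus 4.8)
The plan is to realize $\mathcal{F}^\H_{(q,\mathfrak{N})}$ as the continuous image of a compact digit space, so that compactness follows from the elementary fact that continuous images of compact sets are compact. Since $\mathfrak{N}$ is a complete residue system modulo $q$, it is finite (its cardinality being $N_\D(|q|)$ or $N_\A(q)$), so giving it the discrete topology makes the sequence space $\Omega:=\mathfrak{N}^{\N}$ a compact metrizable space (Tychonoff, or directly a product of finite discrete spaces). I would then introduce the coding map $\Phi:\Omega\to\H$, $\Phi\big((\mu_i)_i\big)=\sum_{i=1}^{\infty}\mu_i q^{-i}$, whose image is by definition exactly $\mathcal{F}^\H_{(q,\mathfrak{N})}$, and reduce everything to showing that $\Phi$ is well defined and continuous.

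The decisive estimate is $\rho:=\|q^{-1}\|<1$, where $\|\cdot\|$ is the relevant Banach lattice norm ($\|\cdot\|_\D$ or $\|\cdot\|_\A$). For $\H=\D$ with $q\leq -2$ this norm is the supremum norm in the idempotent basis, so $\rho=\max\{|\pi_1(q)|^{-1},|\pi_2(q)|^{-1}\}\leq 1/2$. For $\H=\A$ with $q=a\pm\varepsilon$, $a\leq -2$, one has $q^{-1}=a^{-1}\mp\varepsilon\,a^{-2}$, and reading off from \eqref{RNA} the crude bound $\|x+\varepsilon y\|_\A\leq |x|+|y|$ gives $\rho\leq |a|^{-1}+a^{-2}\leq 3/4$. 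In either case $\rho<1$, and this is the only genuinely algebra-specific step, hence the main obstacle; everything else is soft topology.

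Granting $\rho<1$, I use sub-multiplicativity of the norm together with finiteness of $\mathfrak{N}$: setting $M:=\max_{\nu\in\mathfrak{N}}\|\nu\|$, one gets $\|\mu_i q^{-i}\|\leq M\rho^{i}$ uniformly in $(\mu_i)\in\Omega$. By the Weierstrass $M$-test the series defining $\Phi$ converges uniformly on $\Omega$; in particular each $z\in\mathcal{F}^\H_{(q,\mathfrak{N})}$ is well defined with $\|z\|\leq M\rho/(1-\rho)$. Each partial sum $(\mu_i)\mapsto\sum_{i=1}^{N}\mu_i q^{-i}$ depends continuously on finitely many discrete coordinates, hence is continuous on $\Omega$, and as a uniform limit of continuous maps $\Phi$ is continuous. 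Therefore $\mathcal{F}^\H_{(q,\mathfrak{N})}=\Phi(\Omega)$ is the continuous image of a compact space, hence compact.

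As an alternative one could avoid the coding map and argue by Heine--Borel directly in $\H\equiv\R^2$: boundedness is the same geometric estimate $\|z\|\leq M\rho/(1-\rho)$, while closedness would be obtained by a diagonal/compactness extraction on digit sequences of a convergent sequence in $\mathcal{F}^\H_{(q,\mathfrak{N})}$. Since that extraction essentially re-proves continuity of $\Phi$, I would favour the coding-map formulation, which isolates the one nontrivial input ($\rho<1$, i.e.\ the Banach lattice structure) and makes the rest automatic.
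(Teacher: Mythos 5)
Your proposal is correct, and it rests on exactly the same decisive input as the paper's proof: the estimate $\|q^{-1}\|<1$ in the relevant Banach lattice algebra norm (obtained for $\D$ from the supremum norm in the idempotent basis, and for $\A$ from an explicit bound on \eqref{RNA} --- your crude bound $\|x+\varepsilon y\|_\A\leq |x|+|y|$ is valid since $x^2+y^2\sin^2\theta+xy\sin 2\theta\leq(|x|+|y|)^2$, and gives $\rho\leq 3/4$ where the paper gets $\sqrt{3}/|a|$). Where you differ is in the soft-topology packaging: the paper argues sequential compactness directly, extracting from a sequence $(z_n)$ in $\mathcal{F}^\H_{(q,\mathfrak{N})}$ a subsequence along which every digit $\mu_i^{(n_k)}$ converges and then bounding $\|z_{n_k}-z\|$ by $\sup_{i\geq 1}\|\mu_i^{(n_k)}-\mu_i\|\cdot\rho(1-\rho)^{-1}$, whereas you exhibit $\mathcal{F}^\H_{(q,\mathfrak{N})}$ as $\Phi(\mathfrak{N}^{\N})$ for the coding map $\Phi$, prove uniform convergence of the defining series by the Weierstrass test, and conclude by continuity of $\Phi$ on the compact product space. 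Your formulation buys two things: it absorbs the simultaneous extraction over all digit positions (a diagonal argument the paper leaves implicit) into Tychonoff's theorem, and it avoids the paper's final inequality, whose right-hand side involves a supremum over \emph{all} $i\geq 1$ that does not literally tend to zero even though each coordinate converges --- the correct conclusion there really requires splitting off a geometric tail, which is precisely what uniform convergence of the partial sums accomplishes. So the two arguments are the same in substance, but yours is the tighter write-up.
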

 \begin{proof} Let $(q, \mathfrak{N})$ be a number system in $\Z_\H$. 
 Suppose first that $\H=\D$. One then has $\|q^{-1}\|_\D<1$, since $q\leq-2$. This yields that the series  $\sum \mu_i q^{-i}$ is absolute convergent for every sequence $\mu=(\mu_i)_{i\geq 1}$ of elements of $\mathfrak{N}$. Let  
 $(z_n)$ be a sequence of elements of 	$\mathcal{F}_{ ( q,\mathfrak{N})}$. Then  
 	\begin{equation*}
 	z_n= \displaystyle  \sum_{i=1}^{\infty}  \frac{\mu^{(n)}_i}{q^i},~ \mu^{(n)}_i \in  \mathfrak{N}.
 	\end{equation*}
By compactness of $\mathfrak{N} $  we get a subsequence  $(\mu^{(n_k)}_i)$ such that $\mu^{(n_k)}_i$ converges to $\mu_i  \in \mathfrak{N} $ $(i=1,2,\cdots)$. Put 
\begin{equation*}
z= \displaystyle  \sum_{i=1}^{\infty}  \frac{\mu_i}{q^i}
\end{equation*}
Therefore,
\begin{equation*}
\| z_{n_k}-z\|_\D\leq \displaystyle \sup_{i\geq 1}\|\mu^{(n_k)}_i-\mu_i\|_\D \| q^{-1}\|_\D  \|(1-\|q^{-1}\|_\D)^{-1}.
\end{equation*}
This means that there is a subsequence $(z_{n_k})$ of $(z_n)$ that converges to $z$  and this proves the compactness of $\mathcal{F}^\D{(q,\mathfrak{N})} $.\\
In the same manner, one can show  the compactness of $\mathcal{F}^\A_{(q, \mathfrak{N}_0^\A)}$ provided that $\| q^{-1}\|_\A< 1$, where $\|.\|_\A$ is the  Banach lattice algebra norm on $\A$  \eqref{RNA}. Indeed, from Theorem  \ref{thd},  $(q, \mathfrak{N}_0^\A)$ is a number system in $\G_\A$ if and only if $q=a \pm \varepsilon$ with $a\leq -2$,  and then $|q^{-1}|= \frac{1}{|a|}+ \frac{\varepsilon}{a^2}$ with $|a|\geq 2$, that is  $|q^{-1}|\leq \frac{1}{|a|}(1+\varepsilon)$. From (\ref{RNA}), one has $$\|1+\varepsilon\|_\A= \displaystyle \sup_{ \theta \in \left[ 0, 2 \pi\right] }  \sqrt{1+ \cos^2 \theta+ \sin 2\theta }\leq \sqrt{3},$$ and then

\begin{equation*}
\| q^{-1}\|_\A\leq \frac{\sqrt{3}}{|a|} <1,
\end{equation*}
as desired.  
 \end{proof}
 \begin{theo}  Let $(q, \mathfrak{N})$ be a number system in $\Z_\H$ (with $q\leq -2$ if $\H=\D$). Then every $z \in \H$ can be expressed in the form
 	
 	\begin{equation}\label{rh}
 z=	\eta_0+\cdots+ \eta_k q^k+ \displaystyle  \sum_{i=1}^{\infty}  \frac{\mu_i}{q^i} \quad \eta_j, \mu i \in \mathfrak{N}.
 	\end{equation}

 \end{theo}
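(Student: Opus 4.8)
The plan is to combine the already-established fact that the lattice of integers $\Z_\H$ tiles the plane via the fundamental domain $\mathcal{F}^\H_{(q,\mathfrak{N})}$ with the compactness result of Proposition \ref{pfd}. The starting observation is that, since $(q,\mathfrak{N})$ is a number system for $\Z_\H$, every integer $\upsilon \in \Z_\H$ admits a finite expansion $\upsilon = \eta_0 + \cdots + \eta_k q^k$ with $\eta_j \in \mathfrak{N}$. Thus the theorem reduces to handling the fractional part: I must show that for an arbitrary $z \in \H$ there is an integer $\upsilon \in \Z_\H$ and an element $w \in \mathcal{F}^\H_{(q,\mathfrak{N})}$ with $z = \upsilon + w$, because then the finite expansion of $\upsilon$ together with the defining series of $w$ in \eqref{fd} yields precisely the representation \eqref{rh}.

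First I would establish the \emph{covering property}: $\H = \bigcup_{\upsilon \in \Z_\H} (\upsilon + \mathcal{F}^\H_{(q,\mathfrak{N})})$, i.e. the translates of the fundamental domain by the full lattice $\Z_\H$ cover the whole plane. The mechanism is the greedy/division algorithm. Given $z \in \H$, I write $qz = \upsilon_1 + z_1$ where $\upsilon_1 \in \Z_\H$ is chosen so that the remainder $z_1$ lands in a bounded region; iterating, $q^n z = (\text{integer part}) + z_n$ with the $z_n$ confined to a fixed bounded set because $\mathfrak{N}$ is finite and $\|q^{-1}\|<1$. Unwinding the recursion, $z$ minus a suitable lattice element equals a tail series $\sum_{i\geq 1}\mu_i q^{-i}$ with digits $\mu_i \in \mathfrak{N}$, which is exactly a point of $\mathcal{F}^\H_{(q,\mathfrak{N})}$. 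Concretely, at each stage I apply the division algorithm \eqref{algodiv} (hyperbolic case) or its dual analogue to extract a digit $\mu_i$ as the residue of the current integer part modulo $q$; the quotient feeds the next stage, and the accumulated quotients assemble into the integer polynomial part $\eta_0 + \cdots + \eta_k q^k$.

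The point where compactness enters is in guaranteeing that the tail series actually converges to an element lying in $\mathcal{F}^\H_{(q,\mathfrak{N})}$ rather than merely being a formal sum. Proposition \ref{pfd} supplies that $\mathcal{F}^\H_{(q,\mathfrak{N})}$ is compact, hence closed and bounded; boundedness is what keeps the iterated remainders $z_n$ inside a fixed compact set so the digit-extraction never escapes to infinity, and closedness ensures the limit of the partial sums is genuinely a point of the domain. I would phrase the estimate exactly as in the proof of Proposition \ref{pfd}: since $\|q^{-1}\|_\H < 1$ and the digits range over the finite set $\mathfrak{N}$, the geometric bound $\sum_{i\geq 1}\|\mu_i\|_\H\,\|q^{-1}\|_\H^{\,i} < \infty$ gives absolute convergence, so the fractional part is well defined.

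The main obstacle will be making the digit-extraction recursion rigorous and self-consistent: one must verify that the remainder sequence stays in a bounded set uniformly, so that the process produces an honest point of $\mathcal{F}^\H_{(q,\mathfrak{N})}$ and not a divergent series. This is essentially a fixed-set argument — I would pin down an explicit bounded region $B \subset \H$ (for instance the $\|\cdot\|_\H$-ball of radius $\frac{1}{1-\|q^{-1}\|_\H}\max_{\mu\in\mathfrak{N}}\|\mu\|_\H$) and show, using the division algorithm to bound each residue, that $z_n \in B$ for all $n$ implies $z_{n+1}\in B$. Once the remainders are trapped in $B$, the componentwise structure in the idempotent basis $(\eg_1,\eg_2)$ for $\H=\D$ (and the analogous nilpotent decomposition for $\H=\A$) reduces everything to the classical one-dimensional radix expansion in a negative base $b_i \leq -2$, where convergence and the covering property are standard. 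Assembling the integer part and the fractional part then gives \eqref{rh}, completing the proof.
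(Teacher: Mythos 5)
Your reduction of the theorem to the statement ``every $z\in\H$ is a lattice point plus an element of $\mathcal{F}^\H_{(q,\mathfrak{N})}$'' is exactly the paper's reduction, and you correctly identify compactness of $\mathcal{F}^\H_{(q,\mathfrak{N})}$ as the key input. But the mechanism you propose for the covering property --- greedy digit extraction with an invariant bounded region $B$ --- has a genuine gap. The relevant recursion is $z_{n+1}=qz_n-\mu_{n+1}$ with $\mu_{n+1}$ drawn from the \emph{finite} set $\mathfrak{N}$, and this map is \emph{expanding} ($\|q\|>1$): for $z_n$ near the boundary of any large ball $B$, $\|qz_n\|\approx\|q\|\,\|z_n\|>\|z_n\|$ and subtracting a single digit cannot bring the remainder back into $B$. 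The only set invariant under $w\mapsto qw-\mu$ modulo digit choice is essentially $\mathcal{F}^\H_{(q,\mathfrak{N})}$ itself (via its self-similarity $q\,\mathcal{F}=\mathfrak{N}+\mathcal{F}$), so starting the iteration requires already knowing the point lies in a lattice translate of $\mathcal{F}$ --- which is the covering property you set out to prove. If instead you subtract a full lattice element $\upsilon_{n+1}\in Z_\H$ at each step (as your formula $qz=\upsilon_1+z_1$ suggests), the remainders do stay bounded, but unwinding gives $z=\sum_i\upsilon_i q^{-i}$ with lattice-valued ``digits''; replacing each $\upsilon_i$ by its finite $\mathfrak{N}$-expansion piles infinitely many carries onto each power of $q$, and there is no argument that these resolve into a single digit per position. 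Equivalently: the digit at position $q^{-m}$ read off from the expansion of the integer part of $q^nz$ need not stabilize as $n\to\infty$.

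The paper avoids this entirely. For each $n$ it writes $q^nz=\upsilon_n+\delta_n$ with $\upsilon_n\in Z_\H$ and $\delta_n$ in the fundamental parallelepiped, expands $\upsilon_n$ fully in the number system, and divides by $q^n$ to get $z=\vartheta_n+\varsigma_n+\theta_n$ with $\vartheta_n\in Z_\H$, $\varsigma_n\in\mathcal{F}^\H_{(q,\mathfrak{N})}$ and $\theta_n=\delta_n/q^n\to 0$. Compactness of $\mathcal{F}^\H_{(q,\mathfrak{N})}$ (Proposition \ref{pfd}) yields a subsequence $\varsigma_{n_k}\to\varsigma\in\mathcal{F}^\H_{(q,\mathfrak{N})}$, and closedness of the lattice $Z_\H$ forces $\vartheta_{n_k}\to\vartheta\in Z_\H$; no coherence of digits across scales is ever claimed. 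Your fallback of reducing to classical negative-base expansions componentwise in the basis $(\eg_1,\eg_2)$ also does not cover all cases of the theorem: it works for $(q,\mathfrak{N}_h)$ in $\ZB$, where the digit set factors coordinatewise, but the canonical digit sets $\mathfrak{N}_0^\D$ and $\mathfrak{N}_0^\A$ consist of rational integers and do not factor, and $\A$ has no idempotent decomposition at all. To repair your proof, replace the invariant-set iteration by the subsequence argument above.
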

 \begin{proof}  Let $(q, \mathfrak{N})$ be a number system in $\Z_\H$, and $z \in\H$.  As in \cite[Theorem 2]{Kat-Sab1}   the key idea of the proof is to use the decomposition  $\H=Z_\H+ P_{Z_\H}$, where  $P_{Z_\H}\subset \{ z \in \H:\, 0\leq z\leq 1\}$ is the fundamental parallelepiped of the full lattice $Z_{\H}$ in the plane $\H$. Then for every integer $n\geq 1$ we have 
 	\begin{equation*}
 	q^n z= \upsilon_n+\delta_n;  \quad \upsilon_n \in Z_{\H},~\delta_n \in P_{Z_\H}. 
 	\end{equation*}
Since $(q, \mathfrak{N})$ is a number system in $\Z_\H$,  $\upsilon_n$ can be written as  $$\upsilon_n=\nu_0 +\cdots+ \nu_{d} q^{d},~ d=d(n),~ \quad \nu_i \in \mathfrak{N}.$$
 Let $s(n)=\max\{d(n),n\}$ and  let $ (\nu_i^{(n)})$  be such that $ \nu_i^{(n)}=\nu_{n-i}$ if $i\leq n$ and $\nu_i^{(n)}=0$, otherwise. Therefore, for every integer $n\geq 1$ one has
 \begin{equation} \label{eqz}
z= \vartheta_n+\varsigma_n+ \theta_n , 
 \end{equation}
 where
  	 	\begin{equation*}
 	\vartheta_n= \nu_n+ \cdots \nu_{s(n)} q^{s(n)-n}, ~~~ \varsigma_n= \displaystyle  \sum_{i=1}^{\infty}  \frac{\nu^{(n)}_i}{q^i},~~~\theta_n=\frac{\delta_n}{q^n}.
 	\end{equation*}
 	As $\mathcal{F}^\H_{(q, \mathfrak{N})}$ is a compact set (with $q\leq -2$ if $\H=\D$), we get  a subsequence $ \varsigma_{n_k}$ that converges to $\varsigma \in \mathcal{F}^\H_{(q, \mathfrak{N})}$. Then, from \eqref{eqz} we have for every $k$ 
 	\begin{equation*}
 	z= \vartheta_{n_k}+ \varsigma_{n_k}+\theta_{n_k}.
 	\end{equation*}
 	 It's clear that $\theta_{n_k}\rightarrow 0$, since $\theta_n\rightarrow 0$. And since  $Z_{\H}$ is a closed set (for usual topology), the sequence $(\vartheta_{n_k})$ converges to $\vartheta \in  Z_{\H}$.  Hence
 	\begin{equation*}
 	z=\vartheta+\varsigma,\quad \vartheta \in Z_{\H}, \; \; \varsigma \in \mathcal{F}^\H_{(q, \mathfrak{N})}.
 	\end{equation*}
 	This completes the proof.
 	  \end{proof} 
 
 \medskip

 	  {\large \bf Data Availability Statement.} No data were used to support this study.

%\bibliography{biblio-frac-V1}
\end{document}